\newcommand{\red}[1]{{\textcolor{red}{#1}}}
\newtheorem{theorem}{Theorem}
\newtheorem{definition}{Definition}
\newcommand{\B}{\mathcal{B}}
\newcommand{\XBT}{(X, \mathcal{B}, \mu, T)}
\newcommand{\YCS}{(Y, \mathcal{C}, \nu, S)}
\def\qed{\relax\ifmmode\hskip2em \Box\else\unskip\nobreak\hfill$\Box$\fi}
\begin{document}

\title{The isomorphism problem for group actions}

\author{Matthew Foreman and Benjamin Weiss}

\address{
Mathematics Department\\
 University of California, Irvine\\
  Irvine, CA, }
 \email{mdforema@ad.uci.edu}

\address {Institute of Mathematics\\
 Hebrew University of Jerusalem\\
Jerusalem\\
 Israel}
\email{weiss@math.huji.ac.il}

\maketitle

\footnote{Foreman  partially supported by NSF grant DMS-2100367}

 \begin{abstract}
 We discuss the isomorphism problem for ergodic actions of locally compact groups.
In particular we show that the conjugacy relation is not Borel for ergodic measure preserving actions of indicable groups.
\end{abstract}
\bigskip

\section{Countable groups}

   The classic \textbf{isomorphism problem} in ergodic theory is the problem of classifying  measure
   preserving transformations $\XBT$ up to isomorphism. By an isomorphism of two such systems
   $\XBT$, $\YCS$ we mean a measurable invertible transformation $R:X \rightarrow Y$ such that
   $R \mu = \nu$ and $ SR = RT$.
 {Since all standard probability  spaces
   equivalent,  we can restrict attention to the group MPT of Lebesgue measure preserving transformation of the unit interval.} In this case
   $R$ is an element of MPT and the equation becomes $ S = RTR^{-1}$ which is the reason that this is also called the \textbf{conjugacy problem}.


   Over the years many striking positive results have been found. We will not discuss these any further here, a brief summary can be found in \cite {FRW} and \cite{GK}.
   From the perspective of descriptive set theory the isomorphism problem for MPT can be formulated in the following fashion. What is the nature of the set of pairs
   $(S, T) \in MPT \times MPT$ such that there exists some $R \in MPT$ satisfying $S = RTR^{-1}$?. If this set were to be  a Borel  subset that would mean that
   there is some fixed countable protocol, which when presented with a pair $(S,T)$ would determine whether or not $S$ is isomorphic to $T$. In \cite{H} G. Hjorth showed that
   the isomorphism relation in $MPT \times MPT$ is indeed not a Borel set but rather a \textbf{complete} analytic set. In order to explain what this is we recall the definition of a Borel reduction.

   \begin{definition} Let $X$ and $Y$  be Polish spaces. \\

    \textbf{(i)} A set $A \subset  X$  is \textbf{Borel reducible} to $B \subset Y$
if there is a Borel measurable function $f:X \rightarrow  Y$ such that for all $x \in X$ we have
$x \in  A$  if and only if $f(x) \in B$. \\

     \textbf{(ii)} An equivalence relation $E \subset X\times X$ is \textbf{Borel reducible} to an equivalence relation
     $F \subset Y\times Y$  if there is a Borel measurable function $f:X \rightarrow  Y$ such that
     $(u,v) \in E$ if and only if $(f(u) , f(v) \in F$.
     \end{definition}

    A subset $B \subset Y$ is \textbf{complete analytic} if for any Polish space $X$ and analytic
  subset $A \subset X$ there is a Borel reduction from $A$ to $B$. Since it is well-known that there are analytic sets that are not Borel a complete
  analytic set is not Borel.

   Hjorth's result used non-ergodic transformations in an essential way and this left open the basic isomorphism problem restricted to ergodic transformations. As
   is well known, the ergodic transformations form a dense $G_{\delta}$ in MPT and thus they too are a Polish space.
   This is the problem that we will be considering in the sequel. In \cite{FRW} we showed that the isomorphic relation for ergodic transformations forms a complete
   analytic set, in particular it is not Borel. This negative result explains perhaps why the isomorphism problem has been so difficult.

   In that paper we also used a simple Borel reduction to show that the isomorphism relation for ergodic actions of $\mathbb{R}$ is a complete analytic set.

   We turn next to countable groups in general. Just {as} MPT carries a natural Polish topology we can define a Polish topology on the space of all
   measure preserving actions of any countable group $G$. Such an action is simply a homomorphism from $G$ to MPT and a sequence of actions $S_g^{n}$  will converge to
   an action $S_g$ if and only if for each $g \in G$ we have that $S_g^{n}$ converges to $S_g$ in the topology of MPT. Once again we restrict to ergodic
   actions and ask for the nature of the isomorphism relation.

   A few years ago I. Moroz and A. Tornquist in \cite{MT} showed that for a large class of non-amenable groups the isomorphism relation for ergodic actions is a complete analytic
  set. This class includes all groups $G$ that have a subgroup isomorphic to the  free group on 2 generators. Extending an early version of their result E. Gardella and M. Lupini
   posted a preprint \cite{GL} in which they show that the same result holds for \textbf{all} non-amenable groups.
   This leaves open the question for amenable groups beyond the integers.

   A group $G$ is called \textbf{indicable} if there is a homomorphism $\pi$ from $G$ to the integers, $\mathbb{Z}$, with $\pi(G) = \mathbb{Z}$. This class contains
  \red{omit: ``of course"}
   $\mathbb{Z}^d$ for all finite $d$ but is a much larger class.
    To see this simply observe that for an arbitrary countable group $H$ the product group $H \times \mathbb{Z}$ is indicable. We will now show the following:

    \begin{theorem} Let $G$ be an indicable group. Then the isomorphism relation for free ergodic actions of $G$ is a complete analytic set.
    \end{theorem}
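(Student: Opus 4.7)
The plan is to construct a Borel reduction from the isomorphism relation on ergodic elements of $\mathrm{MPT}$, shown to be complete analytic in \cite{FRW}, to the isomorphism relation on free ergodic $G$-actions. Since $\mathbb{Z}$ is free, the extension $1 \to K \to G \xrightarrow{\pi} \mathbb{Z} \to 1$ with $K = \ker\pi$ splits, so one may fix a lift $g_0 \in \pi^{-1}(1)$ and write $G = K \rtimes_\tau \mathbb{Z}$ with $\tau(k) = g_0 k g_0^{-1}$. Fix once and for all a free measure-preserving $G$-action $\rho$ on a standard probability space $(Z, \eta)$ for which the restriction $\rho|_K$ is ergodic; when $K$ is infinite, the Bernoulli $G$-action on $\{0,1\}^G$ works, since its restriction to the infinite normal subgroup $K$ is itself Bernoulli and therefore mixing. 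The reduction sends an ergodic $T \in \mathrm{MPT}$ on $(X, \mu)$ to the $G$-action
\[
\hat{T}_g(x, z) = \bigl( T^{\pi(g)} x,\ \rho_g z \bigr)
\]
on $(X \times Z, \mu \times \eta)$; the assignment $T \mapsto \hat{T}$ is manifestly Borel.

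Freeness of $\hat{T}$ is inherited from $\rho$. Ergodicity proceeds in two stages: a $\hat{T}$-invariant function $f$ is in particular $K$-invariant, so by ergodicity of $\rho|_K$ it depends only on $x$; invariance under $T^{\pi(g)}$ for all $g \in G$ then forces $f$ to be constant, since $\pi$ is surjective and $T$ is ergodic. The forward direction of the reduction is straightforward: any MPT-conjugacy $R : T \to T'$ lifts to $R \times \mathrm{id}_Z : \hat{T} \to \hat{T}'$.

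The substantive step is the reverse direction: from a $G$-conjugacy $\Phi : \hat{T} \cong \hat{T}'$ one must extract a $\mathbb{Z}$-conjugacy $T \cong T'$. The key canonical invariant is the $\sigma$-algebra $\mathcal{F}_K$ of $K$-invariant sets of $\hat{T}$, which by ergodicity of $\rho|_K$ coincides modulo null sets with the pullback of $\mathcal{B}$ along the projection $(x,z) \mapsto x$; on this factor the $G$-action is precisely $T^{\pi(g)}$. Since $\Phi$ preserves $K$-invariant sets, it descends to a $G$-equivariant isomorphism $R$ between the two factors, and, because $\pi$ is surjective, $G$-equivariance of $R$ with respect to the pullback actions is equivalent to $\mathbb{Z}$-equivariance with respect to $T$ and $T'$.

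The main obstacle to watch for is the case of finite $K$ (for example $G = \mathbb{Z} \times (\mathbb{Z}/2)$), where no free $K$-action on a diffuse probability space is ergodic and the Bernoulli choice of $\rho$ breaks down. The workaround is to replace $(Z, \eta, \rho)$ by $(K, u_K)$ with uniform counting measure and the twisted $G$-action $\rho_{(k,n)}(k') = k \tau^n(k')$; the verifications above proceed identically, with transitivity of left-multiplication by $K$ substituting for ergodicity of $\rho|_K$ in both the ergodicity computation and the identification of the $K$-invariant factor.
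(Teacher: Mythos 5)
Your proof follows essentially the same route as the paper's: skew $T$ over the surjection $\pi$ against a fixed free mixing Bernoulli $G$-action, and recover $T$ from the $\sigma$-algebra of $\ker\pi$-invariant sets, on which the $G$-action factors through $\pi$. The one place you go beyond the paper's write-up is the finite-kernel case: the paper asserts outright that the $H$-invariant sets are exactly those of the form $B\times\Omega$, which requires the restriction of the Bernoulli action to $H=\ker\pi$ to be ergodic and hence $H$ to be infinite, so your substitution of the coset space $G/\langle g_0\rangle\cong K$ with counting measure is a correct patch for groups such as $\mathbb{Z}\times(\mathbb{Z}/2)$. One small point in that patch: freeness of $\hat{T}$ is then no longer inherited from $\rho$ alone, since an action of an infinite group on a finite set cannot be free; you need the essential aperiodicity of the ergodic $T$ on a nonatomic space to force $\pi(g)=0$ before left multiplication by $K$ eliminates the rest. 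With that observation your argument is complete.
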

    \begin{proof}    {Let $\pi:G\to \mathbb{Z}$ witness that $G$ is indicable.} We will construct a Borel reduction of the isomorphism relation for ergodic actions of $\mathbb{Z}$ to those of $G$ and then the proof follows by the main result of
    \cite{FRW}.
     For the construction
   of our reduction we will use a Bernoulli shift action of $G$. Define the space $\Omega= \{0,1\}^G$ and then let $\mu$ be the product measure on this
   space where on each coordinate we have the measure $\{ 1/2 ,1/2 \}$. Finally the shift of action of $G$ is defined by $S_g\omega(k) = \omega(kg)$. It is easy
   to see that this action is free and mixing so that not only is it ergodic but its direct product with any ergodic system is also ergodic.

     Since all standard finite measure spaces are isomophic we can view the space of ergodic actions {of $G$ on the measure space $([0,1],\B, \lambda) \times (\Omega,\B,  \mu)$}.
     We will define a Borel reduction $f$ of the isomorphism relation for $\mathbb{Z}$ to the isomorphism relation for ergodic
     $G$  actions  on the measure space $([0,1],\B, \lambda) \times (\Omega,\B,  \mu)$.
     For each $ T \in MPT$ set $$f(T)_g(x ,\omega) = (T^{\pi(g)}x, \sigma_g(\omega)).$$ By our previous remark if $T$ is ergodic so is $f(T)$. Furthermore
     clearly if $T_1 \cong T_2$ then $f(T_1) \cong f(T_2)$. We now have to show, that conversely, if $f(T_1) \cong f(T_2)$ then $T_1 \cong T_2$.

     Let $H = \textrm{ker}(\pi)$, that is the group of elements $ h \in g$ such that $\pi(h)=0$. By the definition for any $h \in H$ $F(T)_h$ acts as the
     identity in the first coordinate. Thus the $\sigma$-algebra of $H$-invariant subsets for any $f(T)$ is the collection of all sets of the form $B \times \Omega$
     where $ B \in \B$. An isomorphism between the $G$ actions $f(T_1)$ and $f(T_2)$ must preserve this sub-$\sigma$ algebra and therefore defines an isomorphism
     between $T_1$ and $T_2$.  $\Box$
     \end{proof}

\section{Locally compact groups}

  In J.  von Neumann's foundational paper \cite{vN}, where the notion of isomorphism was first defined he was mainly considering not actions of $\mathbb{Z}$ but rather flows, i.e. actions of $\mathbb{R}$.
  In our paper (\cite{FRW} , section 9.3) we considered the isomorphism problem for flows and showed that for ergodic flows the isomorphism relation is a complete analytic set. This was done by
  constructing a Borel reduction from transformations to flows which preserves isomorphisms. Here is a brief description of the reduction.

    To each $T \in MPT$ we define a measure preserving  flow $F(T)_s$ on $[0,1] \times [0,1]$ with the product Lebesgue measure called the time-one suspension of the
    of the transformation $T$. Informally, the flow is defined  by starting from the point $(x,y$) moving  up to $(x, y+s)$ as long as $y+s < 1$. When $y + s =1$ we move to  the point
    $(Tx,0)$ and then continue moving up as before. More formally, we identify
    $(x,1)$ with $(Tx,0)$ for all $x \in [0,1]$ to obtain a  cylinder and then simply flow along the foliation of the cylinder by the ``vertical lines. This function is Borel mesurable, in fact it is contnuous.

      Clearly if $T_1 \cong T_2$ then the flows $F(T_1)$ and $F(T_2)$ are isomorphic. To see the converse notice that even if $T$ is ergodic the transformation $F(T)_1$ is not ergodic,
      and its ergodic components are the horizontal lines $[0,1] \times \{y\}$. On each horizontal line the transformation is the original $T$. Thus since the isomorphism of flows  will preserve this ergodic
      decomposition it follows that isomorphism of the flows implies the isomorphism of the transformations.

     For locally compact groups a reduction similar to what was done for indicable groups can be carried out for groups $G$ that have a continuous surjective homomorphism onto $\mathbb{R}$. This includes such groups
     as $\mathbb{R}^d$ for any integer $d$ and of course any groups of the form $H \times \mathbb{R}$ where $H$ is a locally compact group. We now need some fixed  mixing action
     of the locally compact group $G$. Perhaps the simplest such action is the Poisson point process on $G$ with intensity given by a left invariant Haar measure.
     The group acting on itself by left multiplication will preserve the  probability measure on the sample space on which the Poisson point process is defined. The Borel reduction is
     completely analogous to what we did for discrete groups. 

     Up to now we have been considering probability measure preserving actions. The same question can be raised for ergodic actions preserving an infinite $\sigma$-finite measure.
     This was answered for $\mathbb{Z}$ in \cite{GK} where they show that here too the isomorphism relation is complete analytic. To extend this to indicable groups the same construction can be
     carried out. The only new point to check is that the product action is ergodic. A simple way to see this using only results from the classical theory is the following. Since the homomorphism
     $\pi$ is onto any element $g_0 \notin \textrm{ker}(\pi)$ generates an infinite cyclic group. Now the element $g_0$ acts as a mixing transformations in the Bernoulli shift over $G$. In particular
     it is mildly mixing and therefore its direct product with any infinite ergodic measure preserving transformation is again infinite measure preserving and ergodic by the multiplier property
     of mildly mixing transformations.

 \section{On Halmos's conjugacy problem}

   At the end of his classic introduction, which contains lectures given in 1955,  Paul Halmos \cite{Ha}  lists ten Unsolved Problems. Here is his third problem:

   \begin{quote}
 \textbf{ 3.} The outstanding algebraic problem of ergodic theory is the problem of conjugacy: when are two transformations conjugate? This is vague, of course, but there
   are some interesting and quite specific yes-or-no questions connected with it that should be solved. (The general problem might never be solved; it is not even clear what it means.)
   Here are a few samples: (a) Do there exist measure-theoretically non-isomorphic spaces such that the shifts based on them are conjugate? (b) Do there exist two equivalent but non-conjugate
   transformations with continuous spectrum? (c) Do there exist two conjugate ergodic automorphisms of a compact group that do not belong to the same conjugate class in the group
   of all automorphisms of that group?
   \end{quote}

     His specific sample questions were solved many years ago and it is perhaps worthwhile recalling the answers. The introduction of entropy by A. Kolmogorov just a few years
     later (\cite{K-1}, \cite{K-2}) , gave a  quick answer to question (b), namely the 2-shift and the 3-shift both have countable multiplicity Lebesgue spectrum but are not isomorphic since their entropies
     differ. The first to answer question (a) was  L.D.  Meshalkin \cite{Me} who showed in 1959, inter alia, that the Bernoulli shifts base on $(1/4, 1/4, 1/4, 1/4)$ and $(1/2, 1/8, 1/8, 1/8, 1/8)$ were
     isomorphic. Finally question (c) was answered in 1967 by R.Adler and B. Weiss \cite{AW} who showed that the ergodic automorphisms of the torus were isomorphic if and only if they have the same entropy and thus, for example, the
     automorphisms defined by the matrices:
    $\begin{pmatrix} 5 & 2  \\ 2 & 1
     \end{pmatrix}$
     and
     $\begin{pmatrix} 5 & 4 \\ 1 & 1
     \end{pmatrix}$
      are isomorphic but they are not conjugate elements of $Gl(2, \mathbb{Z})$.

\end{document}